\documentclass[11pt]{amsart} 
\usepackage{amssymb,amsmath,latexsym,enumerate,graphicx,bbm,mathptmx,lmodern,cite,ifthen,color}
\allowdisplaybreaks
\usepackage{cmbright}

\newtheorem{theorem}{Theorem} 

\newtheorem{lemma}[theorem]{Lemma}

\newtheorem*{bconjecture}{Bergeron's Conjecture}
\newtheorem{exam}{Example}

\newtheorem*{rem}{Remarks}

\newcommand\Def[1]{{\bf #1}}

\newcommand\GL{\operatorname{GL}}

\newcommand{\gauss}[2]{{\binom{#1}{#2}}_q}

\newcommand\commentout[1]{}

\makeatletter 
\newtheorem*{rep@theorem}{\rep@title}\newcommand{\newreptheorem}[2]{%
\newenvironment{rep#1}[1]{%
\def\rep@title{\bf #2 \ref{##1}}%
\begin{rep@theorem}}%
{\end{rep@theorem}}}
\makeatother
\newreptheorem{theorem}{Theorem}

\newcounter{teach}
\begin{document}

\title[Bergeron's Conjecture]{Bergeron's Conjecture \& A Tale of Two Binomial Coefficients}

\author[T. Amdberhan]{Tewodros Amdeberhan}

\author[M. Beck]{Matthias Beck}


\begin{abstract}
Bergeron's conjecture states that, if $1\leq a<b<c<d$ are integers with $ad=bc$, then one has the coefficient-wise
inequality $\gauss{b+c}b \ \geq \ \gauss{a+d}a$ among two Gaussian polynomials. It originated in algebraic
combinatorics and is wide open. The corresponding inequality for binomial coefficients (i.e., the case $q=1$) must be
known to experts, but we could not find it in the literature. We give two proofs, each generalizing the statement in
a separate direction. Binomial coefficients (and Gaussian polynomials) are fundamental combinatorial objects, and so
one naturally hopes to see a combinatorial proof of this inequality. However, this seems hard to come by. We
nevertheless give a combinatorial proof of a special case.
\end{abstract}

\keywords{Binomial coefficient, Gaussian polynomial, Bergeron's conjecture, combinatorial inequality, bijective proof.}

\subjclass[2010]{Primary 05A10; Secondary 05A15, 05A19, 05E10.}

\date{4 July 2026}

\maketitle


\section{Introduction}

The \Def{Gaussian polynomial} (for positive integers $m \ge n$)
\[
  \gauss m n \, := \, \frac{ (1-q^m) (1-q^{ m-1 }) \cdots (1-q^{ m-n+1 }) }{ (1-q^n) (1-q^{ n-1 }) \cdots (1-q) } \, ,
\]
also known as a \Def{$q$-binomial coefficient}, appears in many mathematical instances and enumerate, e.g., inversions
in 0/1 words, integer partitions, subspaces of finite fields, and quantum groups.
Even such a basic mathematical object can hide surprisingly deep problems; maybe the most famous instance in this case
is the fact that a Gaussian polynomial is \Def{unimodal}, i.e., its coefficients increase up to some point and then
decrease. While this fact is old (going back to Cayley and Sylvester), a \emph{combinatorial} proof was found only in
1990, by O'Hara~\cite{ohara}.
Another such deep problem, still wide open, is the motivation for this note:

\begin{bconjecture}
{\it If $1\leq a<b<c<d$ are integers with $ad=bc$, then
$$\gauss{b+c}b \ \geq \ \gauss{a+d}a \, ,$$
where the inequality is understood coefficient-wise.}
\end{bconjecture}

By way of giving a brief historical background, the well-known \emph{Foulkes conjecture} (see, for instance ~\cite{bergeron2}) was generalized by Vessenes~\cite{vessenes}. 
She conjectured that 
\begin{align} \label{Vessenes}
(h_b\circ h_c) - (h_a \circ h_d)
\end{align}
is \emph{Schur positive} (expands with positive integer coefficients in the Schur basis $\{s_{\mu}\}_{\mu\vdash n}$ of
symmetric polynomials) whenever $a\leq b<c \leq d$, with $n=ad=bc$, and one writes $(h_n\circ h_k)$ for the
\emph{plethysm} of complete homogeneous symmetric functions. 
One can associate a direct combinatorial meaning to 
Vessenes' conjecture in the context of the representation theory
of $\GL(V)$. It would be natural to state that there is a surjective $\GL(V)$-module morphism the other way around
(which is also equivalent). Therefore each $\GL(V)$-irreducible occurs with smaller multiplicity in $S^a(S^d(V))$ than it does in $S^b(S^c(V))$, 
and the conjecture reflects this at the level of the corresponding characters 
(with Schur polynomials appearing as characters of irreducible representations). 

On the other hand, a well-known fact is that $(h_n\circ h_k)(1,q)=\binom{n+k}{k}_q$. Moreover, any non-zero evaluation of a Schur function 
at $1$ and $q$ is, for some $i<j$, of the form $q^i+q^{i+1}+\dots + q^j$. Exploiting these facts on the occasion of~\cite{bergeron}, and assuming that
Schur positivity of~\eqref{Vessenes} holds, Bergeron~\cite{bergeron2} underlined that the evaluation of the difference in \eqref{Vessenes}, at $1$ and $q$, 
would imply the above conjecture.

Again, Bergeron's conjecture is wide open, so it stands to reason that one looks at the special case $q=1$:

\begin{theorem}\label{thm:q=1_case}  If $1\leq a<b<c<d$ are integers with $ad=bc$, then
$$\binom{b+c}b \, \geq \, \binom{a+d}a \, .$$
\end{theorem}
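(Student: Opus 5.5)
Fix the product $n:=ad=bc$ and consider the function $f(x):=\binom{x+n/x}{x}$ for real $x>0$, interpreted through the Gamma function as
\[
  f(x) \, = \, \frac{\Gamma\!\left(x+\tfrac nx+1\right)}{\Gamma(x+1)\,\Gamma\!\left(\tfrac nx+1\right)} \, .
\]
Since $\binom{b+c}{b}=f(b)$ and $\binom{a+d}{a}=f(a)$, and since $a<b<c<d$ with $ad=bc$ forces $a<b<\sqrt n$, it suffices to show that $f$ is increasing on $(0,\sqrt n\,]$. By the symmetry $f(x)=f(n/x)$, $\sqrt n$ would then be the maximum.

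Write $y=n/x$, so $dy/dx=-y/x$, and let $\psi=\Gamma'/\Gamma$ be the digamma function. Then
\[
  \frac{d}{dx}\log f(x) \, = \, \Bigl(1-\frac yx\Bigr)\psi(x+y+1) \, - \, \psi(x+1) \, + \, \frac yx\,\psi(y+1) \, .
\]
Multiplying by $x>0$, the required sign condition for $x\le y$ becomes
\[
  x\bigl[\psi(x+y+1)-\psi(x+1)\bigr] \, \ge \, y\bigl[\psi(x+y+1)-\psi(y+1)\bigr] \, .
\]
Equivalently, the function
\[
  g(t):=t\bigl[\psi(s+1)-\psi(t+1)\bigr], \qquad s=x+y \text{ fixed},
\]
should satisfy $g(x)\ge g(y)=g(s-x)$ whenever $x\le s/2$. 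In probabilistic terms, $\psi(s+1)-\psi(t+1)=\int_0^1 \frac{u^t-u^s}{1-u}\,du$. Hence
\[
  g(x)-g(s-x) \, = \, \int_0^1 \frac{x(u^x-u^s)-(s-x)(u^{s-x}-u^s)}{1-u}\,du \, .
\]
I would then try to show this integrand is nonnegative pointwise. That reduces to an elementary inequality: for $0<u<1$ and $x\le s-x$,
\[
  x u^x - (s-x)u^{s-x} \, \ge \, (2x-s)u^s .
\]
The right-hand side is $\le 0$. To handle it, I would substitute $u=e^{-v}$ and study $h(t)=te^{-vt}$ together with a convexity or mean-value argument in $t$ on $[x,s-x]$.

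\textbf{The main obstacle} is exactly this last pointwise inequality. It is not obviously true for all $u$: for $u$ near $1$, $tu^t$ is increasing in $t$, so the left side can be negative. If pointwise positivity fails, I would fall back on integrating by parts, or on using the series $\psi(s+1)-\psi(t+1)=\sum_{k\ge1}\bigl(\frac1{k+t}-\frac1{k+s}\bigr)$. With the series, the comparison becomes
\[
  \sum_{k\ge1}\Bigl[\frac{x(s-x)}{(k+x)(k+s)}-\frac{(s-x)x}{(k+s-x)(k+s)}\Bigr] \, \ge \, 0 ,
\]
using $x(s-x)$ in both numerators because $\frac{t}{k+t}-\frac{t}{k+s}=\frac{t(s-t)}{(k+t)(k+s)}$. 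Each term is then visibly nonnegative, since $k+x\le k+s-x$. This series route is in fact what I would try first, since it makes the monotonicity of $f$ immediate.

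Alternatively, a discrete route avoids Gamma functions. One could compare consecutive ratios directly. For $a<b$ with $ad=bc$, write
\[
  \binom{b+c}{b}\Big/\binom{a+d}{a} \, = \, \prod_{i=1}^{b}\frac{c+i}{i}\cdot\prod_{i=1}^{a}\frac{i}{d+i}
\]
and pair factors. However, the product $ad=bc$ constraint interacts awkwardly with integer indices, which is why I prefer the continuous monotonicity argument above.
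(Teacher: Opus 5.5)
Your argument is correct and is essentially the paper's second proof (Theorem~\ref{q=1_thr_gamma}). It uses the same function $\log\binom{x+n/x}{x}$, the same digamma expression for its derivative, and the same reduction to $x[\psi(z)-\psi(x+1)]\ge y[\psi(z)-\psi(y+1)]$ for $x\le y$.

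The only difference is the last step. The paper invokes concavity of $\psi$ (decreasing secant slopes), while you compare $\sum_{k\ge1}\frac{xy}{(k+x)(k+s)}$ and $\sum_{k\ge1}\frac{xy}{(k+y)(k+s)}$ termwise, which is the same fact made explicit. Your worry about the integral route is also unfounded: with $u=e^{-v}$ the numerator equals $e^{-vs}\bigl[x(e^{vy}-1)-y(e^{vx}-1)\bigr]\ge 0$, because $w\mapsto(e^w-1)/w$ is increasing.
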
 

This result must be known to the experts, but we could not find a proof in the literature.
We give two proofs below (Theorems~\ref{real_case} and~\ref{q=1_thr_gamma}); one can be understood by a
(good) Calculus student, and the other uses Euler's gamma function.
In fact, in each case we prove a more general result, which might be of independent interest.

Binomial coefficients are mathematical constructs that are even more basic than Gaussian polynomials, and so the statement of
Theorem~\ref{thm:q=1_case} cries out for a combinatorial proof, which in turn would surely shed new light on
Bergeron's conjecture. However, finding such a proof seems hard.

As is well known, the quantity $\binom{b+c}b$ counts integer partitions whose Young diagram fit into a $b
\times c$ box---in other words, integer partitions with at most $b$ parts, each of which is of size at most~$c$.
(And the Gaussian polynomials give the corresponding generating function.)
Thus we may interpret Theorem~\ref{thm:q=1_case} via the following heuristic: given positive integers
$a<b<c<d$ such that the $a \times d$ and $b \times c$ rectangles have the same area, there are more
partitions fitting into the ``less extreme'' $b \times c$ box than into the $a \times d$ box.

At any rate, to share some productive combinatorial insights, we offer a combinatorial proof of a special case 
(setting $c=\beta a, d=\beta b$) of Theorem~\ref{thm:q=1_case} in Section~\ref{sec:combin1}.


\section{First Proof: Being "real"istic}

In what follows, for a real number $x$ and a non-negative integer $n$, we define the binomial coefficient in the standard way:
$$
  \binom{x}{n} \, = \, \frac{1}{n!} \, x(x-1) \cdots (x-n+1) \, .
$$
Furthermore, when both entries are continuous real variables, we extend this definition using Euler's gamma function~\cite[p.~235]{BN}. For real numbers $x$ and $y$ (chosen such that the arguments of the gamma function avoid non-positive integers), this is given by
$$
  \binom{x+y}{x} \, = \, \frac{\Gamma(x+y+1)}{\Gamma(x+1)\Gamma(y+1)} \, .
$$ 


\begin{theorem} \label{real_case} If $\beta\geq1$ is a real number and $b>a\geq1$ are integers, then
$$\binom{b+\beta a}b \, \geq \, \binom{a+\beta b}a \, .$$
\end{theorem}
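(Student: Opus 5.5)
Both sides are polynomials in $\beta$ because the lower entries $a,b$ are integers. Writing them out via the first definition,
$$
\binom{b+\beta a}{b}=\prod_{j=1}^{b}\frac{\beta a+j}{j},\qquad
\binom{a+\beta b}{a}=\prod_{i=1}^{a}\frac{\beta b+i}{i},
$$
so I will compare these two products directly as functions of the real variable $\beta\ge 1$. At $\beta=1$ the two sides coincide, since both equal $\binom{a+b}{a}$. The plan is to show that the ratio
$$
R(\beta)=\frac{\binom{b+\beta a}{b}}{\binom{a+\beta b}{a}}
$$
is nondecreasing on $[1,\infty)$. Then $R(\beta)\ge R(1)=1$, which is the claim.

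To get monotonicity I take logarithmic derivatives. The condition $R'\ge 0$ becomes
$$
\sum_{j=1}^{b}\frac{a}{\beta a+j}\;\ge\;\sum_{i=1}^{a}\frac{b}{\beta b+i},
\qquad\text{i.e.}\qquad
\sum_{j=1}^{b}\frac{1}{\beta+j/a}\;\ge\;\sum_{i=1}^{a}\frac{1}{\beta+i/b}.
$$
Call the left side $L$ and the right side $M$. $L$ has $b$ terms and $M$ has $a<b$ terms. Each is a Riemann-type sum of the decreasing function $f(t)=1/(\beta+t)$, and both sample on the interval $(0,b/a]$ versus $(0,a/b]$, with mesh sizes $1/a$ and $1/b$ respectively. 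Multiplying through by $ab$, it suffices to show
$$
b\cdot\Bigl(\tfrac1a\textstyle\sum_{j=1}^{b} f(j/a)\Bigr)\ \ge\ a\cdot\Bigl(\tfrac1b\textstyle\sum_{i=1}^{a}f(i/b)\Bigr).
$$
This form is not yet convenient, so I will instead argue termwise.

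The termwise argument pairs the $i$-th term of $M$ with the $i$-th term of $L$ for $i\le a$. Since $i/a>i/b$, one gets $1/(\beta+i/a)<1/(\beta+i/b)$, so naive pairing fails. This is the main obstacle: the extra $b-a$ terms of $L$ must compensate. Grouping will handle it. Partition $\{1,\dots,b\}$ into $a$ blocks so that block $i$ consists of the indices $j$ with $(i-1)b/a<j\le ib/a$. Each block has at least $\lfloor b/a\rfloor$ elements, and the total is $b$. For $j$ in block $i$ we have $j/a\le ib/a^2$. I do not want a bound that depends on $b/a^2$, so instead I use the convexity-free estimate, namely a comparison of integrals. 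Since $f$ is decreasing,
$$
\frac1a\sum_{j=1}^{b}f(j/a)\ge\int_{1/a}^{(b+1)/a}f=\log\frac{\beta+(b+1)/a}{\beta+1/a},
\qquad
\frac1b\sum_{i=1}^{a}f(i/b)\le\int_0^{a/b}f=\log\frac{\beta+a/b}{\beta}.
$$
It then remains to verify the elementary inequality
$$
b\log\Bigl(1+\frac{b}{a\beta+1}\Bigr)\ \ge\ a\log\Bigl(1+\frac{a}{b\beta}\Bigr).
$$
Using $\log(1+x)\le x$ on the right, the right side is at most $a^2/(b\beta)$. Using $\log(1+x)\ge x/(1+x)$ on the left, the left side is at least $b^2/(a\beta+1+b)$. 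So it is enough that $b^3\beta\ge a^2(a\beta+b+1)$. This holds for $b\ge a+1$ and $\beta\ge1$, since $b^3\beta\ge (a+1)^2 b\beta \ge a^3\beta+a^2b+a^2$ after expanding. This last check is routine.

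If the integral-comparison bounds turn out too lossy in edge cases such as $a=1$, I will handle those directly. For $a=1$ the claim is $\binom{b+\beta}{b}\ge 1+\beta b$, which is immediate from expanding the product $\prod_j(1+\beta/j)$ and keeping only the constant and linear terms. Combining $R'\ge0$ with $R(1)=1$ then finishes the proof.
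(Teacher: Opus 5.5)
Your framework matches the paper's: show $\frac{d}{d\beta}\log R = L - M \ge 0$ on $[1,\infty)$ and use $R(1)=1$. The proof of $L\ge M$, however, does not go through.

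First, the reduction ``multiplying through by $ab$'' is incorrect. Your displayed inequality is $\frac{b}{a}L\ge\frac{a}{b}M$, i.e.\ $b^2L\ge a^2M$, which is strictly weaker than $L\ge M$ when $b>a$. This is why your final target has $b\log(\cdot)$ on the left and $a\log(\cdot)$ on the right.

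With the correct weights, your integral comparisons give $L\ge a\log\bigl(1+\frac{b}{a\beta+1}\bigr)$ and $M\le b\log\bigl(1+\frac{a}{b\beta}\bigr)$. The needed inequality between these bounds is false. At $(a,b,\beta)=(1,2,1)$ it reads $\log 2\ge 2\log\frac32=\log\frac94$, although the true values are $L=\frac56>M=\frac23$. In fact it fails for every pair $b=a+1$ at $\beta=1$: the shortfall is $\log 2+(a+1)\log\bigl(1-\frac{1}{2(a+1)}\bigr)>0$. By continuity it also fails on a neighbourhood of $\beta=1$, which is exactly where the inequality is tight. So this is not an edge case to be patched afterwards.

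Two further steps are also wrong. The ``routine'' check $b^3\beta\ge a^2(a\beta+b+1)$ fails at $(a,b,\beta)=(3,4,1)$, since $64<72$. For $a=1$, the constant and linear terms of $\prod_{j\le b}(1+\beta/j)$ give only $1+\beta\sum_{j=1}^b\frac1j$, which is less than $1+\beta b$ for $b\ge 2$.

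The missing idea is to make the termwise pairing you abandoned quantitative, instead of replacing it by integrals. Split $L$ into its first $a$ terms and its last $b-a$ terms.

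For $i\le a$ one has exactly $\frac{a}{i+\beta a}-\frac{b}{i+\beta b}=-\frac{(b-a)\,i}{(i+\beta a)(i+\beta b)}$. Since $\frac{i}{i+\beta a}\le\frac{1}{\beta+1}$ and $i+\beta b\ge 1+\beta b$, the total deficit is at most $\frac{a(b-a)}{(\beta+1)(1+\beta b)}$.

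Each of the $b-a$ surplus terms is at least $\frac{a}{b+\beta a}$, so together they contribute at least $\frac{a(b-a)}{b+\beta a}$. Hence $L-M\ge a(b-a)\bigl(\frac{1}{b+\beta a}-\frac{1}{(\beta+1)(\beta b+1)}\bigr)>0$, because $(\beta+1)(\beta b+1)=\beta^2 b+\beta b+\beta+1>b+\beta a$ for $\beta\ge 1$. This is precisely how the paper closes the argument.
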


Theorem~\ref{thm:q=1_case} follows from the special case $\beta=\frac{c}a=\frac{d}b$.

\begin{proof} Fix $a$ and $b$. Introduce the following functions of $\beta$: 
\[
R(a,b,\beta)
\, := \, \frac{\binom{b+\beta a}b}{\binom{a+\beta b}a}
\,  = \, \frac{a!}{b!}\, f(\beta)
\]
where
$$f(\beta) \, := \, \frac{(b+\beta a)(b-1+\beta a)\cdots(1+\beta a)}{(a+\beta b)(a-1+\beta b)\cdots (1+\beta b)} \, .$$
Because $a!, b!>0$, it suffices to verify that $f(\beta)$
increases for $\beta\geq1$ or, equivalently, that $\log f(\beta)$ increases for $\beta\geq1$. 
This, in turn, follows from a sequence of estimates.
\begin{align*}
\frac{d}{d\beta}\log f(\beta) 
\, &= \, \sum_{i=1}^b\frac{a} {i+\beta a}-\sum_{i=1}^a\frac{b} {i+\beta b} \\
 & = \sum_{i=a+1}^b \frac{a} {i+\beta a} + \sum_{i=1}^a\left(\frac{a} {i+\beta a}-\frac{b} {i+\beta b}\right)  \\
  &= \sum_{i=a+1}^b \frac{a} {i+\beta a}
 - (b-a)\sum_{i=1}^a\left(\frac{i}{(i+\beta a)(i+\beta b)}\right).
\end{align*}
For $1\le i\le a$, it is easy to see that
$$\frac{i}{i+\beta a} \, \leq \, \frac{a}{a+\beta a}=\frac1{\beta+1} \, .$$
Thus
$$\sum_{i=1}^a\frac{i}{(i+\beta a)(i+\beta b)}
 \, \leq \, \frac1{\beta+1}\sum_{i=1}^a \frac1{i+\beta b}
 \, \leq \, \frac{a}{(\beta+1)(1+\beta b)} \, .$$
On the other hand, 
$$a\sum_{i=a+1}^{b}\frac{1}{i+\beta a} \, \ge \, \frac{a(b-a)}{b+\beta a} \, .$$
Combining these inequalities gives
\begin{align*}
\frac{d}{d\beta}\log f(\beta)
\, & \, \ge a(b-a)\left(\frac{1}{b+\beta a}-\frac{1}{(\beta+1)(1+\beta b)}\right) \\
&= \, a(b-a)\, \frac{(\beta+1)(\beta b+1)-\beta a-b}{(b+\beta a)(\beta+1)(1+\beta b)}\, . 
\end{align*}
If we define $g(\beta):=(\beta+1)(\beta b+1)-\beta a-b)$, then
$g(1)=b-a+2>0$ and $g'(\beta)=2\beta b+b-a+1>0$
for $\beta\ge 1$. Hence $g(\beta)>0$, so this proves
$\frac{d}{d\beta}\log f(\beta)>0$
for all $\beta\ge 1$. Therefore $f(\beta)$, and hence $R(a,b,\beta)$, is increasing on $[1,\infty)$. Since $R(a,b,1)=1$, we conclude
$R(a,b,\beta)\ge 1,$ which is exactly $\binom{b+\beta a}{b}\ge \binom{a+\beta b}{a}$.
\end{proof}

We remark that this result can also be proved via the famous Chu--Vandermonde identity~\cite[(5.22)]{GKP}
$$\binom{X+Y}Z \ = \ \sum_{k\geq0} \binom{X}k\binom{Y}{Z-k} \, .$$


\section{Second Proof: the effect of $\Gamma$}

\begin{theorem} \label{q=1_thr_gamma}
Let $\theta > 0$ be a real number. Then the function
$$F(x) \ := \ \log \!\binom{x+\frac \theta x}{x}$$
is increasing on the interval $(0,\sqrt{\theta}]$.
\end{theorem}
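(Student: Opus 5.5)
Write $y=\theta/x$, so that
$$F(x)=\log\Gamma\!\left(x+\tfrac{\theta}{x}+1\right)-\log\Gamma(x+1)-\log\Gamma\!\left(\tfrac{\theta}{x}+1\right).$$
The plan is to differentiate directly using the digamma function $\psi=\Gamma'/\Gamma$ and show that $F'(x)\ge 0$ for $0<x\le\sqrt\theta$. Since $\frac{d}{dx}(\theta/x)=-\theta/x^2=-y/x$, we get
$$F'(x)=\Bigl(1-\frac{y}{x}\Bigr)\psi(x+y+1)-\psi(x+1)+\frac{y}{x}\,\psi(y+1).$$
Multiplying by $x>0$, the claim becomes
$$x\bigl[\psi(x+y+1)-\psi(x+1)\bigr]\;\ge\; y\bigl[\psi(x+y+1)-\psi(y+1)\bigr] \qquad\text{whenever } x\le y,$$
and $x\le y$ is exactly the condition $x\le\sqrt\theta$.

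The natural tool is the integral representation
$$\psi(u+s)-\psi(u)=\int_0^1\frac{t^{u-1}(1-t^{s})}{1-t}\,dt.$$
With it, the left side becomes $x\int_0^1 t^{x}\frac{1-t^{y}}{1-t}\,dt$ and the right side becomes $y\int_0^1 t^{y}\frac{1-t^{x}}{1-t}\,dt$. The inequality then reduces to showing that
$$\int_0^1 \frac{x\,t^{x}(1-t^{y})-y\,t^{y}(1-t^{x})}{1-t}\,dt\;\ge\;0 \qquad\text{for } x\le y.$$
To handle this, I would define
$$h(s)=\frac{s\,t^{s}}{1-t^{s}}.$$
The integrand has the same sign as $(1-t^x)(1-t^y)\bigl(h(x)-h(y)\bigr)$, so it suffices to show that $h$ is decreasing in $s>0$ for each fixed $t\in(0,1)$. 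Setting $u=-s\log t>0$, we have $h(s)=\frac{1}{-\log t}\cdot\frac{u}{e^{u}-1}$, and $u/(e^u-1)$ is decreasing in $u$. This makes the integrand pointwise nonnegative, and therefore $F'\ge0$.

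The main obstacle is finding the right reformulation. A naive bound such as concavity of $\psi$ alone (mean-value estimates) appears too weak, because the two sides are very close when $x\approx y$: both vanish at $x=y$. The pointwise comparison through the monotonicity of $u/(e^u-1)$ is what I expect to make the argument work, and I would check the sign bookkeeping carefully at that step.

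Finally, to recover Theorem~\ref{thm:q=1_case} from this, take $\theta=ad=bc$. Then $a<b\le\sqrt\theta$, since $b^2<bc$. It follows that $F(a)\le F(b)$, that is,
$$\binom{a+d}{a}\le\binom{b+c}{b}.$$
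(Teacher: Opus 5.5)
Your proof is correct, but it takes a different (though closely related) route from the paper's. Both arguments reach the same reduction,
$$x\bigl[\psi(z)-\psi(x+1)\bigr]\ \ge\ y\bigl[\psi(z)-\psi(y+1)\bigr],\qquad z=x+y+1 .$$
The paper then divides by $xy$ to get
$$\frac{\psi(z)-\psi(x+1)}{z-(x+1)}\ \ge\ \frac{\psi(z)-\psi(y+1)}{z-(y+1)} .$$
This says that secant slopes of a concave function to a fixed right endpoint decrease as the left endpoint moves right (here $x+1\le y+1$). Concavity of $\psi$ comes from $\psi''(u)=-2\sum_{k\ge0}(u+k)^{-3}<0$.

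So your remark that concavity of $\psi$ alone is too weak is mistaken. The needed inequality is an exact secant-slope comparison, not a mean-value estimate, and its tightness at $x=y$ costs nothing.

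In fact your pointwise inequality $x\,t^x(1-t^y)\ge y\,t^y(1-t^x)$, divided by $xy$, is the same secant comparison for the concave function $u\mapsto -t^u$ with common endpoint $x+y$. The monotonicity of $u/(e^u-1)$ is just an equivalent way of stating this. Your proof is therefore the paper's argument applied kernel by kernel inside the integral representation of $\psi$.

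What each buys: the paper's version is shorter and uses only concavity of $\psi$. It would work verbatim with $\log\Gamma$ replaced by any function whose derivative is concave. Yours needs the integral representation and a convergence check. In return it gives an explicit nonnegative-integrand certificate for $xF'(x)$, with strict positivity for $x<\sqrt\theta$ visible directly.
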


Theorem~\ref{thm:q=1_case} follows from Theorem~\ref{q=1_thr_gamma} by setting $\theta = ad = bc$ and noting the equivalencies 
$a\leq \sqrt{\theta} \iff a\leq d$ and
$b\leq \sqrt{\theta} \iff b\leq c$,
and so, with $a \le b \leq \sqrt{\theta}$, the monotonicity of $F$ implies $F(b)\geq F(a)$, i.e., 
$$   \log \!\binom{b + c}{b}  \ \ge \ \log \!\binom{a + d}{a} \, .$$

\begin{proof} 
For $x>0$ we may write
$$F(x) \, = \,  \log \Gamma\!\left(x+\frac{\theta}{x}+1\right) - \log \Gamma(x+1)- \log \Gamma\!\left(\frac{\theta}{x}+1\right).$$
Let $y := \frac{\theta}{x}$ and $z := x + y + 1$, and recall that the \emph{digamma function} is $\psi = \frac{\Gamma'}{\Gamma}$.  
We compute 
\begin{align*}
F'(x) \ &= \ \left(1-\frac{y}{x}\right)\psi(z) - \psi(x+1)  + \frac{y}{x}\,\psi(y+1) \\
\ &= \ y \left(\frac{\psi(z) - \psi(x+1)}{y}  - \frac{\psi(z) - \psi(y+1)}{x} \right).
\end{align*}
Observe that
$$\psi''(x) \ = \ -2\sum_{k=0}^{\infty} \frac{1}{(x+k)^3} \ < \ 0 \, ,$$
that is, $\psi$ is concave. Hence the secant slope
$$ u \ \mapsto\ \frac{\psi(z)-\psi(u)}{z-u}$$
is decreasing in $u$. Therefore, if $x \le y$, then we gather the comparison
$$\frac{\psi(z)-\psi(x+1)}{z-(x+1)}   \;\ge\;  \frac{\psi(z)-\psi(y+1)}{z-(y+1)} \, .$$
But $z-(x+1) = y$ and $z-(y+1) = x$. Thus $F'(x)\ge 0$ whenever $x \le y$.  
Since $y=\frac \theta x$, the condition $x\le y$ is equivalent to
$$ x \le \frac{\theta}{x} \quad\Longleftrightarrow\quad   x^2 \le \theta \quad\Longleftrightarrow\quad
x\leq\sqrt{\theta} \, . $$
Hence $F(x)$ is increasing on the interval $(0,\sqrt{\theta}]$.
\end{proof}

\section{Some Combinatorics}\label{sec:combin1}

We already mentioned that combinatorial proofs for the above results are hard to come by, as much as they are desired.
To exhibit that not all is lost, we now build such a combinatorial proof for a special case of Theorem~\ref{real_case},
that is, $\beta\geq2$ is an integer (the case $\beta=1$ is clear).

\begin{lemma} \label{lemma1}
For integers $b> a \ge 1$,
$$ \binom{b+2a}b \ge \binom{a+2b}{a}.$$
\end{lemma}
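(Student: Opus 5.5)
Analytically, the statement is immediate from Theorem~\ref{real_case} with $\beta=2$. The point of this section, however, is a \emph{combinatorial} proof. So I would aim for an explicit injection from a set counted by $\binom{a+2b}{a}$ into a set counted by $\binom{b+2a}{b}$.

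\textbf{Setup.} Write $n:=a+b$. The plan is to decompose both sides by the Chu--Vandermonde identity in a way that splits off a common block of size $n$:
\begin{align*}
\binom{b+2a}{b} &= \binom{n+a}{b} = \sum_{k\ge 0}\binom{a}{k}\binom{n}{b-k},\\
\binom{a+2b}{a} &= \binom{n+b}{a} = \sum_{k\ge 0}\binom{b}{k}\binom{n}{a-k}.
\end{align*}
Combinatorially, a $b$-subset of $[n+a]$ is recorded by its trace $T$ on the last $a$ letters (with $|T|=k$) and its trace $S$ on $[n]$ (with $|S|=b-k$). Similarly, an $a$-subset of $[n+b]$ is recorded by its trace $T'$ on the last $b$ letters and its trace $S'$ on $[n]$. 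Since $\binom{n}{a-k}=\binom{n}{b+k}$ by complementation, the second sum becomes $\sum_k \binom{b}{k}\binom{n}{b+k}$.

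\textbf{Term comparison.} Both sums therefore range over subsets of the common ground set $[n]$. On the left the subset has size $b-k$ and carries a $k$-subset of an $a$-set; on the right it has size $b+k$ and carries a $k$-subset of a $b$-set. The sizes $b\pm k$ sit symmetrically around $b$, while $n/2=(a+b)/2<b$. So by unimodality of $j\mapsto\binom{n}{j}$ (proved bijectively, e.g.\ by a standard parenthesis-matching injection), $\binom{n}{b-k}\ge\binom{n}{b+k}$ whenever $b-k\ge n-(b+k)$, which always holds because $2b\ge n$. The extra factor goes the wrong way, though: $\binom{b}{k}\ge\binom{a}{k}$. So I would not compare term by term. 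Instead I would regroup, giving a genuinely two-parameter count on each side.

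\textbf{Construction of the injection.}
\begin{itemize}
\item Realize the $b$-set $[n+b]\setminus[n]$ as the $a$ "tail" letters together with $b-a$ extra letters borrowed from $[n]$.
\item Send a pair $(T',S')$ to a pair $(T,S)$: the part of $T'$ in the $a$ tail letters stays in $T$, and the part of $T'$ in the $b-a$ extra letters is absorbed into the subset of $[n]$.
\item Then apply the reflection or matching injection on $[n]$ to fix the resulting size discrepancy between $S'$ and $S$.
\end{itemize}
The choice $\beta=2$ should make this bookkeeping work, because the two "halves" $n+a$ and $n+b$ share exactly the block $[n]$, and $2b\ge n$ is exactly what makes the matching injection available.

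\textbf{Main obstacle.} I expect the hard step to be checking that the absorption map is injective, that is, that the extra letters moved into $[n]$ can be recovered after the matching injection is applied. My first attempt would be to fix the $b-a$ borrowed letters as the \emph{last} $b-a$ elements of $[n]$. I would then encode the matching injection via Dyck-type pairings that leave those positions distinguishable, by treating them as unmatched "up" steps. If that fails, the fallback is an induction on $b-a$: fix $a$, and show combinatorially that the ratio of the two sides does not decrease when $b$ increases to $b+1$. This amounts to an explicit injection witnessing
\[
(2a+b+1)(2b+2)(2b+1)\ \ge\ (b+1)(a+2b+2)(a+2b+1),
\]
realized on triples of choices, with base case $b=a$, where the two sides are equal.
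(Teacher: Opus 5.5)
Your first sentence is already a valid proof of the inequality, since Theorem~\ref{real_case} with $\beta=2$ is literally this lemma, and your two Vandermonde expansions are correct. However, the combinatorial construction you actually propose cannot be completed.

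Your map keeps the part of $T'$ lying in the $a$ tail letters as $T$ and only moves things in the complement of the tail. An injective map of this kind must, for each fixed trace $T$ with $|T|=j$, inject the right-hand objects with that trace into the left-hand ones.
\begin{itemize}
\item On the right, the remaining $a-j$ elements come from $[n]$ together with the $b-a$ extra letters, a set of size $2b$. This gives $\binom{2b}{a-j}$ objects.
\item On the left, the remaining $b-j$ elements come from $[n]$ with $n=a+b$. This gives $\binom{a+b}{b-j}$ objects.
\end{itemize}
For $j=0$ you would need $\binom{2b}{a}\le\binom{a+b}{a}$, which fails for every $b>a\ge1$; already $4>3$ at $a=1$, $b=2$. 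So the injectivity issue you flag as the main obstacle is an outright impossibility, not bookkeeping, and no choice of matching on $[n]$ repairs it.

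Ironically, the term-by-term comparison you discarded does work:
\[
\frac{\binom{a}{k}\binom{a+b}{b-k}}{\binom{b}{k}\binom{a+b}{a-k}}
=\frac{a!\,(b+k)!}{b!\,(a+k)!}
=\prod_{i=1}^{k}\frac{b+i}{a+i}\ \ge\ 1 \qquad (0\le k\le a),
\]
so the gain in the $[n]$-factor always outweighs the loss from $\binom bk\ge\binom ak$. Your fallback is also correct as algebra. After cancelling $b+1$, your cubic inequality reads $2(2a+b+1)(2b+1)\ge(a+2b+1)(a+2b+2)$, and the difference of the two sides is $a(4b+1-a)>0$; this is a one-line check you left undone. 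Both of these are computations, though, not the bijective argument you were aiming for.

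What you are missing for a combinatorial proof is that no injection is needed; the paper double counts instead.
\begin{itemize}
\item Take $U$ with $|U|=b+2a$ and a disjoint $V$ with $|V|=b-a$.
\item Let $\mathcal A$ be the $a$-subsets of $U\cup V$, so $|\mathcal A|=\binom{a+2b}{a}$.
\item Let $\mathcal B$ be the $2a$-subsets of $U$, so $|\mathcal B|=\binom{b+2a}{b}$.
\item Link $A$ to $B$ when $A\cap U\subseteq B$.
\end{itemize}
Each $B$ is linked to exactly $\binom{a+b}{a}$ sets, namely the $a$-subsets of $B\cup V$. Each $A$ with $k=|A\cap U|\le a$ is linked to $\binom{b+2a-k}{b}\ge\binom{a+b}{a}$ sets. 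Comparing the two counts of links gives $\binom{b+2a}{b}\binom{a+b}{a}\ge\binom{a+2b}{a}\binom{a+b}{a}$, and the common positive factor cancels. The slack you tried to capture with a matching on $[n]$ is supplied here by the inequality $k\le a$ on the $\mathcal A$-side.
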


\begin{proof} By the division algorithm, we write $b=na+r$ where $n\geq1$ and $a>r\geq0$ so that the assertion is tantamount to
$$ \binom{(n+2)a+r}{2a} \ge \binom{(2n+1)a+2r}{a}.$$
 Let $U$ be a set with $|U| = (n+2)a+r$, and let $V$ be a disjoint set with $|V| = (n-1)a+r$. Define $S = U \cup V$, so $|S| = (2n+1)a+2r$. 

Let's represent the two binomial coefficients as collections of subsets:

\begin{itemize}
    \item Let $\mathcal{A}$ be the collection of all $a$-element subsets of $S$, i.e., $\mathcal{A} = \{ A \subseteq S : |A| = a \}$. Then
    $$
    |\mathcal{A}|=\binom{(2n+1)a+2r}{a}.
    $$

    \item Let $\mathcal{B}$ be the collection of all $(2a)$-element subsets of $U$, i.e., $\mathcal B = \{ B \subseteq U : |B| = 2a \}$. Then
    $$
    |\mathcal{B}|=\binom{(n+2)a+r}{2a}.
    $$
\end{itemize}
We say that $A\in\mathcal{A}$ is \emph{linked to} $B\in\mathcal{B}$ if $A\cap U \subseteq B$.
Let $E$ denote the total number of such links.

\subsection*{Counting from the $\mathcal{B}$-side.}

Fix a subset $B\in\mathcal{B}$. For a subset $A\in\mathcal{A}$ to satisfy $A\cap U\subseteq B$, the set $A$ cannot contain any element of $U\setminus B$. Hence all elements of $A$ must be chosen from the disjoint union $B\cup V$. Since $|B|=2a$ and $|V|=(n-1)a+r$, we have $|B\cup V|=(n+1)a+r$.

Therefore the number of subsets $A$ linked to a fixed $B$ equals
$$
\binom{(n+1)a+r}a.
$$

Because this holds for every $B\in\mathcal{B}$, we gather that
$$E =|\mathcal{B}|\cdot \binom{(n+1)a+r}{a}
=
\binom{(n+2)a+r}{2a}\binom{(n+1)a+r}{a}.
$$

\subsection*{Counting from the $\mathcal{A}$-side.}
Fix a subset $A\in\mathcal{A}$ and let $k=|A\cap U|$. Since $|A|=a$, it must be that $0\leq k\leq a$.
To construct a subset $B\in\mathcal{B}$ linked to $A$, the set $B$ must contain all $k$ elements of $A\cap U$. 
The remaining $2a-k$ elements of $B$ must then be chosen from the remaining $(n+2)a+r-k$ members of $U$.
Hence the number of such subsets $B$ equals
$$\binom{(n+2)a+r-k}{2a-k}=\binom{(n+2)a+r-k}{na+r},$$
using symmetry of binomial coefficients. On the other hand, we have $(n+2)a+r-k \geq (n+1)a+r$. Using this observation together with the monotonicity $\binom{n}k\geq\binom{m}k$, when $n\geq m$, it follows that
$$\binom{(n+2)a+r-k}{na+r} \geq \binom{(n+1)a+r}{na+r}=\binom{(n+1)a+r}a.$$

Thus every subset $A\in\mathcal{A}$ links to at least
$$
\binom{(n+1)a+r}a
$$
subsets in $\mathcal{B}$. Consequently,
$$ E \geq |\mathcal{A}|\cdot \binom{(n+1)a+r}{a} = \binom{(2n+1)a+2r}a\binom{(n+1)a+r}a.$$

Combining the two counts for $E$, we arrive at the estimate
$$
\binom{(n+2)a+r}{2a}\binom{(n+1)a+r}a  \geq  \binom{(2n+1)a+2r}{a}\binom{(n+1)a+r}a.
$$
Canceling the positive factor $\binom{(n+1)a+r}a$ from both gives
$$
\binom{(n+2)a+r}{2a} \ge \binom{(2n+1)a+2r}a
$$
and hence the proof follows.
\end{proof}

The next result generalizes the above lemma while the spirit of part of its proof remains analogous.

\begin{theorem}
For integers $b> a \ge 1$ and integer $\beta\geq2$, 
$$\binom{b+\beta a}b \ge \binom{a+\beta b}{a}.$$
\end{theorem}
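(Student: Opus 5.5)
The plan is induction on $\beta$, with the double-counting argument of Lemma~\ref{lemma1} as the inductive step. The base case $\beta=1$ is the identity $\binom{b+a}{b}=\binom{a+b}{a}$, so it holds trivially. (Lemma~\ref{lemma1} is the case $\beta=2$ and will come out of the same argument.) So fix $\beta\ge 2$ and assume
$$\binom{b+(\beta-1)a}{b} \;\ge\; \binom{a+(\beta-1)b}{a}$$
for all integers $b>a\ge1$.

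Next I set up the sets. Take disjoint sets $U$ and $V$ with $|U|=b+\beta a$ and $|V|=(\beta-1)(b-a)\ge 0$, and put $S=U\cup V$. Then $|S|=a+\beta b$. Let $\mathcal A$ be the $a$-subsets of $S$ and $\mathcal B$ the $(\beta a)$-subsets of $U$. This gives
$$|\mathcal A|=\binom{a+\beta b}{a}, \qquad |\mathcal B|=\binom{b+\beta a}{\beta a}=\binom{b+\beta a}{b}.$$
As before, $A\in\mathcal A$ is linked to $B\in\mathcal B$ if $A\cap U\subseteq B$, and $E$ counts the links.

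Counting from the $\mathcal B$-side: every linked $A$ lies in $B\cup V$, and $|B\cup V|=\beta a+(\beta-1)(b-a)=a+(\beta-1)b$. Hence
$$E=|\mathcal B|\binom{a+(\beta-1)b}{a}.$$
Counting from the $\mathcal A$-side: if $k=|A\cap U|$ with $0\le k\le a$, the number of linked $B$ is
$$\binom{b+\beta a-k}{\beta a-k}=\binom{b+\beta a-k}{b}.$$
This is decreasing in $k$ by monotonicity in the top entry, so it is at least its value at $k=a$, namely $\binom{b+(\beta-1)a}{b}$. Therefore
$$E\ \ge\ |\mathcal A|\binom{b+(\beta-1)a}{b}.$$

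The key step, and the only real obstacle, is to make the two auxiliary factors comparable. In Lemma~\ref{lemma1} they coincided because $\beta-1=1$. In general the $\mathcal A$-side factor $\binom{b+(\beta-1)a}{b}$ is, by the induction hypothesis, at least the $\mathcal B$-side factor $\binom{a+(\beta-1)b}{a}$. Combining the two counts gives
$$|\mathcal B|\binom{a+(\beta-1)b}{a}\;\ge\;|\mathcal A|\binom{b+(\beta-1)a}{b}\;\ge\;|\mathcal A|\binom{a+(\beta-1)b}{a}.$$
Cancelling the positive factor $\binom{a+(\beta-1)b}{a}$ yields $|\mathcal B|\ge|\mathcal A|$, which is the claim. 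Thus the whole argument reduces to this chaining. The division-algorithm bookkeeping $b=na+r$ used in Lemma~\ref{lemma1} is unnecessary and can be dropped.
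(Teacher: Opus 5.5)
Your proof is correct and is essentially the paper's argument: the same sets $U$, $V$ with $|V|=(\beta-1)(b-a)$, the same linking relation $A\cap U\subseteq B$, the same two-sided count, and the same use of the induction hypothesis to compare the two auxiliary factors before cancelling. The differences are cosmetic. You start the induction at the trivial case $\beta=1$, so Lemma~\ref{lemma1} becomes the first inductive step rather than the base case. You also drop the $b=na+r$ bookkeeping, which the paper's argument never actually needs.
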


\begin{proof} For convenience, we write $b=na+r$ where $n\geq1$ and $a>r\geq0$ so that the assertion is tantamount to
$$ \binom{(n+\beta)a+r}{\beta a} \ge \binom{(\beta n+1)a+\beta r}{a}.$$
 Let $U$ be a set with $|U| = (n+\beta)a+r$, and let $V$ be a disjoint set with $|V| = (\beta-1)(n-1)a+(\beta-1)r$. Define $S = U \cup V$, so $|S| = (\beta n+1)a+\beta r$. 

Let's represent the two binomial coefficients as collections of subsets:

\begin{itemize}
    \item Let $\mathcal{A}$ be the collection of all $a$-element subsets of $S$, i.e., $\mathcal{A} = \{ A \subseteq S : |A| = a \}$. Then
    $$
    |\mathcal{A}|=\binom{(\beta n+1)a+\beta r}{a}.
    $$

    \item Let $\mathcal{B}$ be the collection of all $\beta a$-element subsets of $U$, i.e., $\mathcal B = \{ B \subseteq U : |B| = \beta a \}$. Then
    $$
    |\mathcal{B}|=\binom{(n+\beta)a+r}{\beta a}.
    $$
\end{itemize}
We say that $A\in\mathcal{A}$ is \emph{linked to} $B\in\mathcal{B}$ if $A\cap U \subseteq B$.
Let $E$ denote the total number of such links.

We proceed by induction on $\beta\geq2$. The base case $\beta=2$ is exactly the content of Lemma~\ref{lemma1}. 
For the induction step, assume the inequality in the statement of our theorem holds for $\beta - 1$.

\subsection*{Counting from the $\mathcal{B}$-side.}

Fix a subset $B\in\mathcal{B}$. For a subset $A\in\mathcal{A}$ to satisfy $A\cap U\subseteq B$, the set $A$ cannot contain any element of $U\setminus B$. Hence all elements of $A$ must be chosen from the disjoint union $B\cup V$. Since $|B|=\beta a$ and $|V|=(\beta-1)((n-1)a+r)$, we have $|B\cup V|=((\beta-1)n+1)a+(\beta-1)r$.

Therefore the number of subsets $A$ linked to a fixed $B$ equals
$$
\binom{((\beta-1)n+1)a+(\beta-1)r}a.
$$
Because this holds for every $B\in\mathcal{B}$, we gather that
\begin{align*}
E  \ &= \ |\mathcal{B}|\cdot \binom{((\beta-1)n+1)a+(\beta-1)r}a \\ 
&= \ \binom{(n+\beta)a+r}{\beta a} \binom{((\beta-1)n+1)a+(\beta-1)r}a \, .
\end{align*}

\subsection*{Counting from the $\mathcal{A}$-side.}
Fix a subset $A\in\mathcal{A}$ and let $k=|A\cap U|$. Since $|A|=a$, it must be that $0\leq k\leq a$.
To construct a subset $B\in\mathcal{B}$ linked to $A$, the set $B$ must contain all $k$ elements of $A\cap U$. 
The remaining $\beta a-k$ elements of $B$ must then be chosen from the remaining $(n+\beta)a+r-k$ members of $U$.
Hence the number of such subsets $B$ equals
$$\binom{(n+\beta)a+r-k}{\beta a-k}=\binom{(n+\beta)a+r-k}{na+r},$$
using symmetry of the binomials. On the other hand, we have $(n+\beta)a+r-k \geq (n+\beta-1)a+r$
based on $k\leq a$. Using this observation together with the monotonicity of the binomials $\binom{n}k\geq\binom{m}k$, when $n\geq m$, it follows that
$$\binom{(n+\beta)a+r-k}{na+r} \geq \binom{(n+\beta-1)a+r}{na+r}=\binom{(n+\beta-1)a+r}{(\beta-1)a}.$$

Thus every subset $A\in\mathcal{A}$ links to at least
$$
\binom{(n+\beta-1)a+r}{(\beta-1)a}
$$
subsets in $\mathcal{B}$. Consequently,
\begin{align*}
 E &\geq |\mathcal{A}|\cdot \binom{(n+\beta-1)a+r}{(\beta-1)a} = \binom{(\beta n+1)a+\beta r}a \binom{(n+\beta-1)a+r}{(\beta-1)a} \\
& \geq \binom{(\beta n+1)a+\beta r}a \binom{((\beta-1)n+1)a+(\beta-1)r}a
\end{align*}
where the last inequality is due to the induction hypothesis. 
Combining the two counts for $E$, from the $\mathcal{B}$-side and from $\mathcal{A}$-side, 
we arrive at the estimate
\begin{align*}
&\binom{(n+\beta)a+r}{\beta a} \binom{((\beta-1)n+1)a+(\beta-1)r}a \\
&\geq \binom{(\beta n+1)a+\beta r}a \binom{((\beta-1)n+1)a+(\beta-1)r}a \, .
\end{align*}
Canceling the positive factor $\binom{((\beta-1)n+1)a+(\beta-1)r}a$ from both gives
$$
\binom{(n+\beta)a+r}{\beta a} \ge \binom{(\beta n+1)a+\beta r}a
$$
and hence the proof follows.
\end{proof}

\bibliographystyle{amsplain}
\bibliography{bib}

\setlength{\parskip}{0cm} 

\end{document}